\newtheorem{theorem}{Theorem}[section]
\newtheorem{lemma}[theorem]{Lemma}
\theoremstyle{remark}
\newtheorem{remark}{Remark}[section]
\begin{document}
\title{ Eigenfunctions for quasi-laplacian}
\keywords{Quasi-Laplacian, singularity, eigenfunction}
\thanks{\noindent \textbf{MR(2010)Subject Classification}   47F05 58C40}

\author{min chen}
\address[Corresponding author] {University of Science and Technology of China, No.96, JinZhai Road Baohe District,Hefei,Anhui, 230026,P.R.China.} 
\email{cmcm@mail.ustc.edu.cn}
\thanks{The research is supported by the National Nature Science Foudation of China  No. 11721101 No. 11526212 }
\begin{abstract}
To study the regularity of heat flow, Lin-Wang[1] introduced the quasi-harmonic sphere, which is a harmonic map from $M=(\mathbb{R}^m,e^{-\frac{|x|^2}{2(m-2)}}ds_0^2)$ to $N$ with finite energy. Here $ds_0^2$ is Euclidean metric in $\mathbb{R}^m$. Ding-Zhao [2] showed that if the target is a sphere, any equivariant  quasi-harmonic spheres is discontinuous at infinity. The metric $g=e^{-\frac{|x|^2}{2(m-2)}}ds_0^2$ is quite singular at infinity and it is not complete. In this paper , we mainly study the eigenfunction of Quasi-Laplacian $\Delta_g=e^{\frac{|x|^2}{2(m-2)}} ( \Delta_{g_0} - \nabla_{g_0}h\cdot \nabla_{g_0}) =e^{\frac{|x|^2}{2(m-2)}} \Delta_h$ for $h=\frac{|x|^2}{4}$. In particular, we show that non-constant eigenfunctions of $\Delta_g$ must be discontinuous at infinity and non-constant eigenfunctions of drifted Laplacian $\Delta_h=\Delta_{g_0} - \nabla_{g_0} h\cdot \nabla_{g_0}$ is also discontinuous at infinity.
\end{abstract}
\maketitle

\numberwithin{equation}{section}
\section{Introduction}
\setcounter{equation}{0}
 If a heat flow $u(x,t)$ from M to N blows up at a finite time,  Lin-Wang \cite{1}  and Struwe \cite{10} proved that there exists a harmonic map from $R^m$ to $N$ with the conformal metric  $g=e^{-\frac{|x|^2}{2(m-2)}}ds_0^2$ with finite energy of $\omega$ w.r.t this metric 
\[E_g(\omega)=\int_{\mathbb{R}^m}|\nabla \omega|^2 e^{-\frac{|x|^2}{4}}dx<\infty.\]

In \cite{2}, Ding-Zhao showed that equivariant quasi-harmonic spheres are discontinuous at infinity if $N=S^m$. So the behavior of quasi-harmonic spheres is quite different from that of harmonic spheres. Since the metric is very singular at infinity, and it is not complete, we are interested in the analytic property of the Laplace operator of $(\mathbb{R}^m,g)$  which we call Quasi-Laplacian, denoted by $\Delta_g=e^{\frac{|x|^2}{2(m-2)}} \bigg( \Delta_{g_0} - \nabla_{g_0}h\cdot \nabla_{g_0}\bigg) $, where $h=\frac{|x|^2}{4}.$ 
Our first result is the compact embedding theorem. 
\begin{theorem}\label {thm:1.1}
Let $\bar{M}=(\mathbb{R}^m,g,dV_g)$, the embedding operator $H^1_0(\bar M)\hookrightarrow L^2(\bar{M})$ is compact.
\end{theorem}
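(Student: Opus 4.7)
The plan is to combine a local Rellich--Kondrachov argument with a uniform tail estimate in $L^2(\bar M)$, the tail bound coming from integration by parts against the Gaussian weight $e^{-|x|^2/4}$ that appears in the Dirichlet form. Writing $w_0(x)=e^{-m|x|^2/(4(m-2))}$ and $w_1(x)=e^{-|x|^2/4}$, the metric gives $dV_g=w_0\,dx$ and $|\nabla u|_g^2\,dV_g=w_1|\nabla u|^2\,dx$, so $\|u\|_{L^2(\bar M)}^2=\int u^2 w_0\,dx$ and $\|u\|_{H^1_0(\bar M)}^2=\int u^2 w_0\,dx+\int|\nabla u|^2 w_1\,dx$. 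The structural fact I would exploit is that $w_0\leq w_1$ and $w_0/w_1=e^{-|x|^2/(2(m-2))}$, so once an $L^2(w_1\,dx)$ bound is secured, the $L^2(w_0\,dx)$ tails are crushed by the Gaussian ratio.

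The main technical ingredient I would establish is a weighted Hardy inequality: for $v\in C_c^\infty(\mathbb{R}^m)$ supported in $\{|x|>R_0\}$ with $R_0^2\geq 4m$,
\[
\int v^2 w_1\,dx \leq \frac{C}{R_0^2}\int|\nabla v|^2 w_1\,dx.
\]
This follows by integrating by parts against the identity $\nabla\cdot(xw_1)=(m-|x|^2/2)w_1$ and applying Cauchy-Schwarz; on the support of $v$ one has $|x|^2/2-m\geq|x|^2/4$, which yields $\int v^2|x|^2 w_1\,dx\leq C\int|\nabla v|^2 w_1\,dx$, and dividing by $R_0^2$ gives the displayed bound. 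Combining this with a cutoff decomposition $v=(1-\chi)v+\chi v$, where $\chi$ vanishes on $B_{R_0}$ and equals $1$ outside $B_{2R_0}$, and using that $w_1/w_0$ is bounded on every bounded Euclidean set, I obtain
\[
\int v^2 w_1\,dx \leq C_1\int v^2 w_0\,dx+C_2\int|\nabla v|^2 w_1\,dx,
\]
and by density the continuous embedding $H^1_0(\bar M)\hookrightarrow L^2(\mathbb{R}^m,w_1\,dx)$.

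Given this, compactness follows quickly. Let $\{u_n\}\subset H^1_0(\bar M)$ be bounded; by the previous step $\int u_n^2 w_1\,dx\leq C$ uniformly in $n$. On any Euclidean ball $B_R$ both weights are bounded above and below by positive constants, so $\{u_n|_{B_R}\}$ is bounded in the ordinary $H^1(B_R)$; Rellich--Kondrachov and a diagonal extraction then produce a subsequence converging in $L^2_{\mathrm{loc}}(\mathbb{R}^m)$ to some $u$. The tail estimate is immediate,
\[
\int_{|x|>R}(u_n-u)^2 w_0\,dx\leq e^{-R^2/(2(m-2))}\int(u_n-u)^2 w_1\,dx\leq C'\,e^{-R^2/(2(m-2))},
\]
which tends to $0$ uniformly in $n$ as $R\to\infty$, giving $u_n\to u$ in $L^2(\bar M)$. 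The main obstacle is the ``reverse'' embedding $H^1_0(\bar M)\hookrightarrow L^2(w_1\,dx)$: one is trying to control the larger weight $w_1$ by the smaller weight $w_0$ plus the gradient energy, and a naive cutoff argument fails because the cutoff error contributes an annulus integral $\int u_n^2 w_1\,dx$ whose ratio to $\int u_n^2 w_0\,dx$ grows exponentially in $R$. It is precisely the weighted Hardy inequality that supplies the missing $R_0^{-2}$ factor allowing this error to be absorbed.
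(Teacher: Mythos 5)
Your proof is correct, but it takes a genuinely different route from the paper. The paper also begins with local compactness on a compact exhaustion plus a diagonal extraction, but it handles the escape of mass to infinity through measure theory: it invokes the Bakry--\'Emery logarithmic Sobolev inequality for the weight $e^{-m|x|^2/(4(m-2))}$ (using $\mathrm{Ric}_f=\tfrac{m}{2(m-2)}g_0$ and the domination $e^{-m|x|^2/(4(m-2))}\le e^{-|x|^2/4}$ on the gradient term) to bound $\int u_k^2\log u_k^2\,d\mu$ uniformly, then applies the De La Vall\'ee Poussin criterion with $Q(t)=t\log t$ to get uniform integrability of $\{u_k^2\}$, and concludes by the Vitali convergence theorem on the finite measure space. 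You instead prove a weighted Hardy inequality by integrating by parts against $\nabla\cdot(x\,e^{-|x|^2/4})=(m-\tfrac{|x|^2}{2})e^{-|x|^2/4}$, upgrade the norm to a continuous embedding $H^1_0(\bar M)\hookrightarrow L^2(e^{-|x|^2/4}dx)$, and then kill the tails using the explicit decay $w_0/w_1=e^{-|x|^2/(2(m-2))}$. Your argument is more elementary and self-contained (no citation of the log-Sobolev inequality or of uniform-integrability machinery), and it yields strictly more: a quantitative, exponentially decaying uniform tail bound and the a priori estimate $\int u^2e^{-|x|^2/4}dx\le C\|u\|^2_{H^1_0(\bar M)}$, which is of independent interest here since the gradient energy in $E_g$ carries exactly that weight. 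The paper's route, by contrast, generalizes more readily to settings where one only has a log-Sobolev or Poincar\'e inequality rather than an explicit Gaussian structure to integrate by parts against. The one point worth making explicit in your write-up is that $\int(u_n-u)^2w_1\,dx$ is uniformly bounded (e.g.\ by Fatou applied along the a.e.\ convergent subsequence), which is what the final displayed tail estimate silently uses.
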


 Cheng and Zhou \cite{4} used the result of Hein- Naber \cite{5} to obtain the Lichnerowicz type theorem for the drifted Laplacian $\Delta_h=\Delta_{g_0} - \nabla_{g_0}h\cdot \nabla_{g_0}$ as follows.
~\\

\textbf{Theorem}
(Bakry-$\acute{E}$mery-Mogan-Hein-Naber) Let $(M^n, g ,e^{-f})$ be a complete smooth metric measure space with 
$Ric_f \ge \frac{a}{2}g$ for some constant $a>0.$ Then
 
(1) the spectrum of $\Delta_f$  for $M$ is discrete.

(2) the first nonzero eigenvalue, denoted by $\lambda_1(\Delta_f),$ of $\Delta_f$ for $M$ is the spectrum gap of $\Delta_f$ and satisfies 
\[\lambda_1(\Delta_f)\ge \frac{a}{2},\]
~\\
here $\Delta_f=\Delta-\nabla f\cdot\nabla$. 

So we wonder if the spectrum of $\Delta_g$ is also discrete. According to Theorem 1.1, we know the compact embedding theorem on $\mathbb{R}^m$ with metric $g$ still holds even though the metric is not complete. Combined with Theorem 10.6 in [6], we obtain the following result.
\begin{theorem}\label {thm:1.2}
The spectrum of $\Delta_g$ in $\mathbb{R}^m$ with the conformal metric $g=e^{-\frac{|x|^2}{2(m-2)}}g_0$ is discrete.
\end{theorem}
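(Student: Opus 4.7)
The plan is to realize $\Delta_g$ as a self-adjoint operator on $L^2(\bar M,dV_g)$ through its Friedrichs extension, and then upgrade the compact embedding in Theorem \ref{thm:1.1} to compactness of the resolvent, which forces the spectrum to be discrete. Because $g$ is incomplete, one cannot appeal to essential self-adjointness on $C_c^\infty$; working with a Dirichlet realization via the quadratic form is what makes this work without completeness.

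First I would introduce the energy form
\[
Q(u,v)=\int_{\bar M}\langle \nabla_g u,\nabla_g v\rangle_g\,dV_g,\qquad u,v\in H^1_0(\bar M),
\]
which is densely defined, non-negative, symmetric, and closed (since $H^1_0(\bar M)$ is by definition the closure of $C_c^\infty(\mathbb{R}^m)$ in the graph norm $\|u\|_{L^2}^2+Q(u,u)$). By the standard Friedrichs construction, $Q$ corresponds to a unique non-negative self-adjoint operator $T$ on $L^2(\bar M,dV_g)$ such that $T u=-\Delta_g u$ for $u\in C_c^\infty(\mathbb{R}^m)$. This is the operator whose spectrum we study.

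Next I would combine the form estimate with Theorem \ref{thm:1.1}. The form norm $\bigl(\|u\|_{L^2}^2+Q(u,u)\bigr)^{1/2}$ is precisely the $H^1_0(\bar M)$ norm, so the resolvent $(T+\Id)^{-1}\colon L^2(\bar M)\to L^2(\bar M)$ factors as
\[
L^2(\bar M)\xrightarrow{\;(T+\Id)^{-1}\;}H^1_0(\bar M)\hookrightarrow L^2(\bar M),
\]
where the first arrow is bounded (with norm $\le 1$ in the form sense) and the second is compact by Theorem \ref{thm:1.1}. Therefore $(T+\Id)^{-1}$ is a non-negative, self-adjoint, compact operator on $L^2(\bar M,dV_g)$.

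Finally, the spectral theorem for compact self-adjoint operators gives an orthonormal basis of eigenvectors for $(T+\Id)^{-1}$ with eigenvalues $\mu_k\searrow 0$ of finite multiplicity; translating back via $\lambda_k=\mu_k^{-1}-1$ yields eigenvalues $0\le\lambda_1\le\lambda_2\le\cdots\to\infty$ of $T=-\Delta_g$, each of finite multiplicity and with no continuous spectrum. This is the content of Theorem 10.6 in [6] applied to our setting, and it is the conclusion of Theorem \ref{thm:1.2}. The main (in fact the only) obstacle is the incompleteness of $g$: it forces us to specify a self-adjoint extension by hand, and Friedrichs/Dirichlet is the correct choice because its form domain is exactly $H^1_0(\bar M)$, where Theorem \ref{thm:1.1} applies.
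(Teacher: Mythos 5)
Your proposal is correct and follows essentially the same route as the paper: the paper simply invokes the equivalence (compact embedding $W^1_0\hookrightarrow L^2$ $\Leftrightarrow$ discrete spectrum) from Theorem 10.6 of Grigoryan's book together with Theorem \ref{thm:1.1}, and what you have written is precisely the standard proof of the relevant direction of that equivalence (Friedrichs/Dirichlet realization, resolvent factoring through the compact embedding, spectral theorem for compact self-adjoint operators). Your explicit attention to the incompleteness of $g$ and the need to fix the self-adjoint extension by hand is a welcome clarification that the paper leaves implicit.
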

The discreteness of spectrum of $\Delta_g$ guarantees the existence  of eigenfunctions of $\Delta_g. $ The metric is singular at infinity, so we are interested in continuity of eigenfunctions at infinity. And we prove that:
 \begin{theorem}\label {thm:1.3}
Let $u$ be a non-constant eigenfunction of the quasi-Laplacian $\Delta_g$ corresponding to any eigenvalue $\lambda$, then $u$ must be discontinuous at $\infty.$
\end{theorem}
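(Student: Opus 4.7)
My plan is to proceed by contradiction: assume $u$ is a non-constant eigenfunction with $\Delta_g u = \lambda u$ extending continuously to infinity, $\lim_{|x|\to\infty}u(x) = c$ for some $c\in\mathbb{R}$. I would rewrite the equation in drifted form
\[
\Delta_h u = \lambda\, e^{-|x|^2/(2(m-2))}\,u,\qquad \Delta_h = \Delta_{g_0}-\tfrac{x}{2}\cdot\nabla_{g_0},\quad h=\tfrac{|x|^2}{4},
\]
so that $e^{-h}\Delta_h u=\mathrm{div}(e^{-h}\nabla u)$ makes $e^{-h}dx$ the natural integration-by-parts measure, while $dV_g = e^{-h-|x|^2/(2(m-2))}dx$ is the $L^2(\bar M)$ measure. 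The case $\lambda=0$ is immediate: $\Delta_h u=0$ with $u$ bounded and $u\to c$ forces $u\equiv c$ by the strong maximum principle (the inward drift $-x/2$ guarantees that bounded $\Delta_h$-harmonic functions on $\mathbb{R}^m$ are constants), contradicting non-constancy.

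For $\lambda\neq 0$ I would first justify integration by parts globally. Using that $u$ is bounded and $\int|\nabla u|^2 e^{-h}dx<\infty$ from $u\in H^1_0(\bar M)$, a Cauchy--Schwarz argument with the Gaussian weight $e^{-R^2/4}$ on $\partial B_R$ yields a sequence $R_n\to\infty$ along which boundary integrals $\int_{\partial B_{R_n}}\varphi\,e^{-h}\partial_r u\,dS$ vanish for any bounded $\varphi$. Testing the equation against $e^{-h}$ then gives the orthogonality $\int u\,dV_g=0$, and testing against $u\,e^{-h}$ gives $\int|\nabla u|^2 e^{-h}dx=-\lambda\int u^2\,dV_g$, so $\lambda<0$. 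Setting $v = u - c$ with $v\to 0$ at infinity and $\Delta_h v = \lambda e^{-|x|^2/(2(m-2))}(v+c)$, one obtains the further relation $\int v\,dV_g = -c\,\Vol(\bar M)$.

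To produce the contradiction I would decompose $u = \sum_{k,j}u_{k,j}(r)Y_{k,j}(\theta)$ in spherical harmonics. Each radial coefficient solves a second-order linear ODE whose two linearly independent asymptotic solutions at infinity are a bounded branch (approaching a constant $c_{k,j}$) and a rapidly growing branch $\sim r^{-m}e^{r^2/4}$; the finiteness of the Dirichlet energy $\int|\nabla u|^2 e^{-h}dx$ selects the bounded branch, so each $u_{k,j}(r)\to c_{k,j}$ exists. Uniform continuity of $u$ at $\infty$ forces $c_{k,j}=0$ for $k\geq 1$ and $c_{0,0}$ proportional to $c$. Integrating the modal ODE against $r^{m-1}e^{-r^2/4}$ and using that the boundary term at infinity vanishes yields orthogonality constraints of the form
\[
-k(k+m-2)\int_0^\infty r^{m-3}e^{-r^2/4}u_{k,j}(r)\,dr \;=\; \lambda\int_0^\infty r^{m-1}e^{-m r^2/(4(m-2))}u_{k,j}(r)\,dr,
\]
which, together with the shooting structure of the ODE (regular at $0$, bounded at $\infty$), should over-determine the eigenspace and force $u\equiv c$, contradicting the non-constancy of $u$. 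The main obstacle I anticipate is converting the topological continuity of $u$ at $\infty$ into genuine decay of the individual angular modes, and in controlling the compatibility of the exponentially small source $\lambda c\,e^{-|x|^2/(2(m-2))}$ with the singular conformal factor $e^{|x|^2/(2(m-2))}$ that appears in $\Delta_g$; if the modal approach proves unwieldy, a fall-back is to pair the equation $\Delta_h v = \lambda e^{-|x|^2/(2(m-2))}(v+c)$ against the weight $e^{|x|^2/(2(m-2))}\cdot e^{-h}$ to isolate the constant source $\lambda c$ and extract the contradiction directly from the resulting non-vanishing residue.
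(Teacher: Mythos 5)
Your overall architecture --- argue by contradiction, expand in spherical harmonics, and analyze the radial coefficient ODEs --- is the same as the paper's, and several of your intermediate observations (the divergence form of the modal equation, the vanishing of the limits $c_{k,j}$ for $k\ge 1$, the identity $\int u\,dV_g=0$ and the sign of $\lambda$) are correct. But the proof is not closed, and the gap sits exactly at the decisive steps. First, for the modes $k\ge 1$ you only conclude that the \emph{limits} $c_{k,j}$ vanish; to reduce to the radial case you must show $u_{k,j}\equiv 0$. The paper does this by observing that the potential $-\lambda e^{-r^2/(2(m-2))}+\lambda_k r^{-2}$ is eventually positive, so a nontrivial mode is eventually monotone of one sign, and then a Gronwall-type estimate $-f'/f\le \lambda_k r^{-3}\left(2+o(1)\right)$ forces $f$ to converge to a nonzero constant, contradicting $f(+\infty)=0$. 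Your proposal contains no argument for this implication.

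Second, and more seriously, your treatment of the radial mode is only a hope: ``should over-determine the eigenspace and force $u\equiv c$'' is not an argument. The integral identity you display is a single linear functional constraint on each $u_{k,j}$; for $k=0$ it degenerates to $\int u\,dV_g=0$, which every eigenfunction with $\lambda\neq 0$ satisfies automatically and hence carries no new information, and your fall-back pairing against the weight $e^{|x|^2/(2(m-2))}e^{-h}$ is both uncomputed and problematic for $m\le 4$, where that weight is not integrable. Note that your own asymptotic analysis (finite Dirichlet energy selects the bounded branch of the ODE) already says the radial part of any eigenfunction \emph{does} converge at infinity, so the contradiction cannot come from mode asymptotics alone --- this is precisely the point at which a genuinely new idea is needed. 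The paper closes this case by a comparison argument: with $L=\frac{d^2}{dr^2}+\left(\frac{m-1}{r}-\frac{r}{2}\right)\frac{d}{dr}$ it takes the explicit supersolution $y=e^{2r^2}$, computes $Ly=e^{2r^2}(14r^2+4m)$ so that $L(y-u)\ge 0$ outside a large ball, and then appeals to the maximum principle on the exterior domain to force $u\ge y-C_{R_0}\to+\infty$. Whatever reservations one may have about that application of the maximum principle on an unbounded domain, your proposal offers no substitute for this step, so as written it does not prove the theorem.
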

 And we  find that the eigenfunction of $\Delta_h$  has the same property.
\begin{theorem}\label {thm:1.4}
Let $u$ be a non-constant eigenfunction of  the drifted Laplacian $\Delta_h$ corresponding to an eigenvalue $\lambda$, then $u$ must be discontinuous at $\infty.$
\end{theorem}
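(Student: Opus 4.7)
The plan is to argue by contradiction: a continuous extension of $u$ to the one-point compactification $\mathbb{R}^m\cup\{\infty\}$ would make $u$ bounded and in $L^2(e^{-h}dV)$, and a derivative-iteration argument would then force $u$ to be a polynomial, which, being nonconstant, is unbounded.

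Suppose the nonconstant eigenfunction $u$ of $\Delta_h$ with eigenvalue $\lambda$ extends continuously to $\mathbb{R}^m\cup\{\infty\}$. Compactness of the extended domain gives $u\in L^\infty(\mathbb{R}^m)$, and the finiteness of the Gaussian measure $e^{-h}dV$ yields $u\in L^2(e^{-h}dV)$. A cutoff integration by parts, testing $\Delta_h u=-\lambda u$ against $\phi_R^2\,u\,e^{-h}$ for a smooth cutoff $\phi_R$ equal to $1$ on $B_R$ and $0$ outside $B_{2R}$ with $|\nabla\phi_R|\le C/R$, gives, after letting $R\to\infty$, the clean identity
\[
\int_{\mathbb{R}^m}|\nabla u|^2 e^{-h}\,dV=\lambda\int_{\mathbb{R}^m}u^2 e^{-h}\,dV,
\]
since the cutoff-gradient term is dominated by $(C^2/R^2)\int u^2 e^{-h}\to 0$. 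Consequently $\lambda\ge 0$, and $\lambda=0$ forces $u$ constant, so $\lambda>0$.

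Differentiating $\Delta_h u=-\lambda u$ in $x_j$ and using $\nabla h=x/2$ yields $\Delta_h(\partial_j u)=-(\lambda-\tfrac{1}{2})\partial_j u$, so every nonzero partial derivative is itself an eigenfunction, with eigenvalue reduced by $1/2$. Iterating, each multi-index derivative $\partial^\alpha u$ is either zero or an eigenfunction with eigenvalue $\lambda-|\alpha|/2$. The same cutoff Caccioppoli estimate, applied to $\partial^\beta u$, gives
\[
\int_{\mathbb{R}^m}|\nabla\partial^\beta u|^2 e^{-h}\,dV\le 2(\lambda-|\beta|/2)\int_{\mathbb{R}^m}(\partial^\beta u)^2 e^{-h}\,dV,
\]
which propagates $L^2(e^{-h})$-integrability from $\partial^\beta u$ to $\nabla\partial^\beta u$ when $\lambda-|\beta|/2\ge 0$, and forces $\partial^\beta u\equiv 0$ when $\lambda-|\beta|/2<0$ (the right side being nonpositive makes $\nabla\partial^\beta u\equiv 0$, so $\partial^\beta u$ is a constant, and as a nontrivial eigenfunction with nonzero eigenvalue it must vanish). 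Therefore $\partial^\alpha u\equiv 0$ once $|\alpha|>2\lambda$, and $u$ is a polynomial of degree at most $\lfloor 2\lambda\rfloor$.

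A nonconstant polynomial on $\mathbb{R}^m$ is unbounded: writing the leading homogeneous part as $P_d\not\equiv 0$ and picking $\xi\in S^{m-1}$ with $P_d(\xi)\ne 0$, the restriction $u(t\xi)=t^d P_d(\xi)+o(t^d)\to\pm\infty$ as $t\to\infty$, contradicting $u\in L^\infty$. The principal obstacle is the bootstrap step that propagates $L^2(e^{-h}dV)$-integrability through successive derivatives, keeping the iteration alive; this rests entirely on the Gaussian decay of the weight $e^{-h}$, which is exactly what makes the cutoff-gradient terms $\int|\nabla\phi_R|^2(\partial^\beta u)^2 e^{-h}$ vanish in the limit.
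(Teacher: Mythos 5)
Your argument is correct, and it takes a genuinely different route from the paper. The paper expands $u$ in spherical harmonics, isolates the radial mode $f_0(r)=\langle u(r,\cdot),\varphi_0\rangle$, and derives the contradiction from the ODE $f_0''+(\frac{m-1}{r}-\frac r2)f_0'=-\lambda f_0$ by comparing $f_0$ with the explicit function $y=e^{2r^2}$ via a maximum principle on the exterior domain $B_{R_0}^C$. You instead exploit the ladder structure of the Ornstein--Uhlenbeck operator: the commutation identity $\Delta_h(\partial_j u)=-(\lambda-\tfrac12)\partial_j u$, combined with the weighted energy identity $\int|\nabla v|^2e^{-h}\,dV=\mu\int v^2e^{-h}\,dV$ obtained from the cutoff argument, bootstraps $L^2(e^{-h}dV)$-integrability through successive derivatives and annihilates every derivative of order exceeding $2\lambda$, so a bounded eigenfunction is a polynomial and a bounded polynomial is constant. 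This Liouville-type argument proves the stronger statement that every nonconstant eigenfunction of $\Delta_h$ is unbounded, gives the degree bound $\lfloor 2\lambda\rfloor$, and sidesteps both the spherical-harmonic reduction and the delicate use of the maximum principle on an unbounded exterior domain where the comparison function $e^{2r^2}$ itself tends to $+\infty$. What the paper's method buys in exchange is uniformity: the same radial ODE comparison also handles Theorem 1.3 for the quasi-Laplacian, where the extra factor $e^{-\frac{r^2}{2(m-2)}}$ in the eigenvalue equation destroys the clean commutation relation your iteration relies on. Two details you should write out in a final version: the cross term $2\int\phi_R u\,\nabla\phi_R\cdot\nabla u\,e^{-h}\,dV$ requires a Young or Cauchy--Schwarz step (first to conclude $\nabla u\in L^2(e^{-h}dV)$, then to pass to the clean identity), and in the borderline case $\lambda-|\beta|/2=0$ the estimate yields $\nabla\partial^\beta u\equiv0$ directly, which still terminates the iteration with a polynomial.
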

 According to the Proposition 2.1 in \cite{8},  a logarithmic Sobolev inequality (2.2) implies a Poincar$\acute{e}$ inequality. Then we get the existence of a global solution of $\Delta_gu=f$ in $\mathbb{R}^m$ in the last section. 
\hspace{0.4cm}

\section{compact embedding theorem and discrete spectrum}
\setcounter{equation}{0}
\medskip
We first recall some  facts in measure theory. 
A subset $K$ of $L^1(\mu)$ is called uniformly integrable if given $\epsilon >0,$ there is a $\delta>0$ so that $\text{sup}\{\int_E|f|d\mu:f\in K\}<\epsilon$ whenever $\mu(E)<\delta.$ It is known that
\begin{lemma}
(De La Vall$\acute{e}$e Poussin theorem, cf \cite{8}) Under the above notation, a subset $K$ of $L^1(\mu)$ is uniformly integrable if and only if there exists a non-negative convex function $Q$ with $\lim\limits_{t\to\infty}\frac{Q(t)}{t}=\infty$ so that 
\[\text{sup}\bigg\{\int_{\Omega}Q(|f|)d\mu: f\in K\bigg\}<\infty.\]
\end{lemma}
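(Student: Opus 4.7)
My plan is to prove the two directions separately, with the necessity direction being the more substantial one.

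For the sufficiency direction, assume such a convex $Q$ exists with $C := \sup_{f \in K}\int_{\Omega} Q(|f|)\,d\mu < \infty$. Given $\epsilon > 0$, use $\lim_{t\to\infty}Q(t)/t = \infty$ to pick $M$ so large that $t \leq (\epsilon/C)\,Q(t)$ for all $t \geq M$. Splitting the integral over $E$ into the parts where $|f| < M$ and $|f| \geq M$ yields
\[
\int_E |f|\,d\mu \leq M\mu(E) + \frac{\epsilon}{C}\int_\Omega Q(|f|)\,d\mu \leq M\mu(E) + \epsilon,
\]
so choosing $\delta = \epsilon/M$ gives uniform integrability of $K$.

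For the necessity direction, the idea is to construct $Q$ explicitly as the convex function $Q(t) = \sum_{n=1}^{\infty}(t - a_n)_+$ for a carefully chosen sequence $a_n \uparrow \infty$. Each summand is non-negative and convex, so $Q$ inherits these properties, and $Q(t)/t \to \infty$ because for any $N$ and $t \geq 2 a_N$ one has $Q(t) \geq \sum_{n=1}^N (t - a_n) \geq Nt/2$, while $N$ is arbitrary.

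To select the $a_n$, I first observe that uniform integrability together with finiteness of $\mu$ (which holds in the paper's setting, since $dV_g$ has finite total mass on $\mathbb{R}^m$) produces a uniform $L^1$ bound $A := \sup_{f\in K}\|f\|_{L^1(\mu)} < \infty$, obtained by partitioning $\Omega$ into finitely many pieces of measure less than $\delta$. Letting $\delta_n$ be the value of $\delta$ associated to $\epsilon = 2^{-n}$ in the definition of uniform integrability, I choose an increasing sequence $a_n > A/\delta_n$ tending to infinity; Chebyshev's inequality then gives $\mu\{|f| > a_n\} \leq A/a_n < \delta_n$, so $\int_{\{|f| > a_n\}}|f|\,d\mu \leq 2^{-n}$ uniformly over $f \in K$. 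Monotone convergence then yields
\[
\int_\Omega Q(|f|)\,d\mu = \sum_{n=1}^{\infty}\int_\Omega (|f| - a_n)_+\,d\mu \leq \sum_{n=1}^{\infty} 2^{-n} = 1
\]
uniformly over $K$, completing the proof.

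The main obstacle is arranging the estimates $\int_{\{|f| > a_n\}}|f|\,d\mu \leq 2^{-n}$ to be uniform over the whole family $K$; this is exactly what the Chebyshev bound combined with the uniform $L^1$ bound delivers, and it is the only place where the finiteness of $\mu$ is used. The construction via $(t-a_n)_+$ is the standard trick that turns a sequence of tail bounds into a single superlinear majorant.
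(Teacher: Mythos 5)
The paper does not prove this lemma at all --- it is quoted with a citation to Ledoux --- so there is nothing internal to compare against; your argument is a correct, self-contained proof and is in fact the classical one for the De La Vall\'ee Poussin criterion. Both directions check out: the sufficiency half via the split at the level $M$ where $Q(t)/t \ge C/\epsilon$ (modulo the trivial degenerate case $C=0$, where one should replace $C$ by $\max(C,1)$), and the necessity half via the superlinear majorant $Q(t)=\sum_n (t-a_n)_+$ together with the Chebyshev tail bounds. One point worth making explicit: with the paper's definition of uniform integrability (the $\epsilon$--$\delta$ condition alone, with no built-in $L^1$ bound), the necessity direction genuinely requires $\mu(\Omega)<\infty$ to extract the uniform bound $A=\sup_{f\in K}\|f\|_{L^1(\mu)}$, and the statement can fail on an infinite measure space; you identify this correctly and note that it holds in the paper's setting since $\int_{\mathbb{R}^m} dV_g<\infty$, which is exactly the hypothesis under which the lemma is applied in the proof of Theorem 1.1.
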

In local coordinates,
\begin{align*}
& |\nabla_g u|^2_g =g^{ij} \frac{\partial u^{\alpha}}{\partial x^i}\frac{\partial u^{\alpha}}{\partial x^j}=e^{\frac{|x|^2}{2(m-2)}}|\nabla_{g_0} u|^2_{g_0};\\
& dV_g =\sqrt{detg}dV_{g_0} =e^{-\frac{m}{4(m-2)}|x|^2}dV_{g_0} .\\
\end{align*}
And
\begin{align*}
\Delta_g &= \frac{1}{\sqrt{detg}} \frac{\partial}{\partial x^i} \big(\sqrt{detg}\ g^{ij} \frac{\partial}{\partial x^j}\big) \\
&= e^{\frac{m|x|^2}{4(m-2)}} \frac{\partial}{\partial x^i} \bigg( e^{-\frac{m|x|^2}{4(m-2)}} \ e^{\frac{|x|^2}{2(m-2)}} \ \frac{\partial}{\partial x^i}\bigg) \\
&= e^{\frac{m|x|^2}{4(m-2)}} \bigg( \frac{\partial^2}{\partial x_i^2} - \frac{x_i}{2} \frac{\partial}{\partial x_i}\bigg) \\
&= e^{\frac{|x|^2}{2(m-2)}} \bigg( \Delta_{g_0} - \nabla_{g_0}h\cdot \nabla_{g_0}\bigg) \\
&= e^{\frac{|x|^2}{2(m-2)}}(\Delta_{g_0})_h,
\end{align*}
where $h=\frac{|x|^2}{4}$. 
Note that 
\begin{equation}
\begin{split}
&\int_{\mathbb{R}^m} u^2dV_g=\int_{\mathbb{R}^m}u^2 e^{-\frac{|x|^2}{4(m-2)}m}dV_{g_0}; \\
&\int_{\mathbb{R}^m}|\nabla_g u|_g dV_g = \int_{\mathbb{R}^m}|\nabla_{g_0} u|_{g_0}^2 e^{-\frac{|x|^2}{4}}dV_{g_0}.\\
\end{split}
\end{equation}
The $H^1_0({\mathbb{R}^m}, g, dV_g)$-norm of $u$ with the metric $g$ can be view as the sum of  the $L^2({\mathbb{R}^m}, g_0,e^{-\frac{|x|^2}{4(m-2)}m} dV_{g_0})$-norm of $u$ and the $L^2({\mathbb{R}^m}, g_0,e^{-\frac{|x|^2}{4}}dV_{g_0})$-norm of $\nabla_{g_0}u$. We can use the result  which is already known in the complete smooth metric measure space $({\mathbb{R}^m}, g_0,e^{-f}dV_{g_0})$. 

Considering that $\mathbb{R}^m$ is not compact, we need to do more work. To deal with this non-compact case, we construct a compact exhaustion $\{D_i\}$ of  $\mathbb{R}^m$ with $C^1$ boundary.   We know $\mu(\bar{M})=\int_{\mathbb{R}^m}dV_g=\int_{\mathbb{R}^m}e^{-\frac{m}{4(m-2)}|x|^2}dV_{g_0}$. Assume $d\mu=e^{-\frac{m|x|^2}{4(m-2)}}dV_{g_0}$. Then we can use the result  that $H^1_0(D_i,g_0,d\mu) \subset L^2(D_i,g_0,d\mu)$ is compact. 

And in 1985, Bakery-Emery \cite{3} showed that if $(M,g,e^{-f}dV)$ has $\text{Ric} f\ge \frac{a}{2}g$ for some constant $a\ge 0$ and finite weighted volume $\int_Me^{-f}dV,$ then the following logarithmic Sobolev inequality holds:
\begin{equation}
\int_Mu^2\text{log}(u^2)e^{-f}dv\le \frac{4}{a}\int_M|\nabla u|^2e^{-f}dv,
\end{equation}
for all smooth function $u$. Here $\text{Ric}_f:=\text{Ric}+\nabla^2f$ . 
\begin{remark}
the logarithmic Sobolev inequality (2.2) holds for all $u\in C^\infty_0(M)$, then it holds for all $u\in H_0^1(M,g_0,e^{-f}dv).$\\
In fact, for $u\in H_0^1(M,g_0,e^{-f}dv),$ there exits a sequence $\{u_k\}$, $u_k\in C^\infty_0(M)$ and $u_k\rightarrow u$ in $H_0^1(M,g_0,e^{-f}dv)$. Since $u_k\rightarrow u$ in $L^2(M,g_0,e^{-f}dv)$, there is a subsequence of $u_k$, still denoted by $u_k$, and $u_k$ a.e converges to $u$. Then we will have \\
\begin{align*}
0&\le \int_Mu^2\text{log}u^2e^{-f}dv\\
&\le\text{ lim inf}\int_Mu_k^2 \text{log}u_k^2e^{-f}dv\\
&\le \text{ lim inf} (\frac{4}{a}\int_M|\nabla u_k|^2e^{-f}dv)\\
&=\frac{4}{a}\int_M|\nabla u|^2e^{-f}dv.
\end{align*}
Hence (2.2) holds for $u\in H_0^1(M,g_0,e^{-f}dv).$
\end{remark}
 Combined with the logarithmic Sobolev inequality with which we obtain the uniformly integrability, we can use Vitali convergence theorem 
 to deduce compact  embedding of $H^1_0(\bar{M})$ in $ L^2(\bar{M}).$
\begin{proof}[Proof of Theorem \ref{thm:1.1}]
It is known that identical map $H^1_0(\bar M)\rightarrow L^2(\bar M)$ is a embedding. So it suffices to prove that any sequence of $\{u_k\}^{\infty}_{k=1}$ bounded in $H^1_0(\bar M)$ has subsequence converging in $L^2(\bar M)$ to a function $u\in L^2(\bar M).$ We will use Vitali convergence theorem to deduce that $\int_{\mathbb{R}^m}|u_k-u|^2dV_g\rightarrow 0.$ Since $\mu(\bar M)< +\infty,$ it suffices for us to prove that \\
(1) $\{u_k\}^{\infty}_{k=1}$ converges in measure to $u$;\\
(2) $\{u_k\}^{\infty}_{k=1}$ is uniformly integrable.\\
Let $\{D_i\}$ be an compact exhaustion of $\mathbb{R}^n$, with $C^1$ boundary $\partial D_i$. For $\Omega_i=\{D_i,g,dV_g\}$ and any $\{u_k\}$ bounded in $H^1_0(\Omega_i),$
\medskip

\begin{align*}
\|u_k\|_{H^1(D_i,g_0,d\mu)}&=\int_{D_i}u_k^2e^{-\frac{|x|^2}{4(m-2)}m}dV_{g_0}+ \int_{D_i}|\nabla_{g_0} u_k|_{g_0}^2 e^{-\frac{m|x|^2}{4(m-2)}}dV_{g_0} \\
&\leq \int_{D_i}u_k^2 e^{-\frac{|x|^2}{4(m-2)}m}dV_{g_0} + \int_{D_i}|\nabla_{g_0} u_k|_{g_0}^2 e^{\frac{|x|^2}{2(m-2)}} e^{-\frac{m|x|^2}{4(m-2)}}dV_{g_0} \\
&= \int_{D_i}u_k^2dV_g + \int_{D_i}|\nabla_g u_k|_g ^2dV_g \\
&=\|u_k\|_{H^1_0(\Omega_i)} \leq\|u_k\|_{H^1_0(\bar M)} \leq C.
\end{align*}
It implies $\{u_k\}$ bounded in $H^1_0(D_i,g_0,d\mu)$ and it is known that $H^1_0(D_i,g_0,d\mu) \subset L^2(D_i,g_0,d\mu)$ is compact. So the sequence $\{u_k\}$ restrict to $D_i$ has subsequence converging in $L^2(D_i,g_0,d\mu)$. 
Considering that $||u_k||_{L^2(D_i,g,dV_g)}=||u_k||_{L^2(D_i,g_0,d\mu)},$ we obtain that the sequence $\{u_k\}$ restrict to $D_i$ has subsequence converging in $L^2(D_i,g,dV_g).$ Note that an $L^2$ convergent sequence has an a.e convergent subsequence. By passing to a diagonal subsequence, there exists a subsequence of $\{u_k\}$ still denoted by  $\{u_k\}$ and a function $u$ defined on $\mathbb{R}^m$ so that ${u_k}$ a.e converges to $u$ on each $D_i$ and hence on $\mathbb{R}^m$. By fatou's lemma,

\begin{align*}
\int_{\mathbb{R}^m}|u|_g^2dV_g &= \int_{\mathbb{R}^m}{}|u|_{g_0}^2 e^{-\frac{m}{4(m-2)}|x|^2}dV_{g_0} \leq \liminf \int_{\mathbb{R}^m}{}|u_k|_{g_0}^2 e^{-\frac{m}{4(m-2)}|x|^2} dV_{g_0} \\
&=\liminf \int_{\mathbb{R}^m} |u_k|_g^2 dV_g.
\end{align*}
We can get $u\in L^2(\bar{M})$ and Condition(1) holds. \\
On the other hand, by the hypothesis of the theorem, if we take $M=\mathbb{R}^m$ and $f=\frac{m}{4(m-2)}|x|^2$, then $\int_{\mathbb{R}^m} e^{-f}dV_{g_0}< +\infty$ and $\text{Ric}f=\frac{m}{2(m-2)}g_0.$  The logarithmic Sobolev inequality (2.2) holds for $H_0^1(\mathbb{
R}^m,g_0,e^{-f}dV_{g_0})$. \\
\begin{align*}
\int_{\mathbb{R}^m}u_k^2 \text{log} u_k^2 d\mu&=\int_{\mathbb{R}^m}u_k^2 \text{log} u_k^2 e^{-\frac{m}{4(m-2)}|x|^2}dV_{g_0}\\ &\le \frac{4(m-2)}{m}\int_{\mathbb{R}^m}|\nabla_{g_0} u_k|_{g_0}^2e^{-\frac{m}{4(m-2)}|x|^2}dV_{g_0} \\
&\le \frac{4(m-2)}{m}\int_{\mathbb{R}^m}|\nabla_{g} u_k|_{g}^2e^{-\frac{m}{4(m-2)}|x|^2}dV_{g_0} \\
&= \frac{4(m-2)}{m}\int_{\mathbb{R}^m}|\nabla_{g} u_k|_{g}^2dV_{g} \\
&\le ||u_k||_{H^1_0(\bar{M})}.
\end{align*}
With the boundedness of $H^1_0(\bar{M})$-norm of $u_k$, it implies that there exists a constant $\bar{C}$ satisfying 
\[\int_{\mathbb{R}^m}u_k^2 \text{log} u_k^2 d\mu\le \bar{C}.\]
Take $Q(t)=t \log t$, one can see that $Q(t)$ and $\{u_k^2\}$ satisfy the conditions of Lemma 2.1 in \cite{8} and thus $\{u_k^2\}$ is uniformly integrable. Condition (2) holds. Therefore the embedding operator $H^1_0(\bar M)\hookrightarrow L^2(\bar M)$ is compact.
\end{proof}

Applying the following result for weighted manifold, we can immediately obtain Theorem 1.2.
\begin{theorem}
\cite{6} Let $(M,g,\mu)$ be a weighted manifold. Then the following conditions are equivalent.

(a) The spectrum of $M$ is discrete.

(b) The embedding operator $W^1_0(M)\hookrightarrow L^2(M)$ is compact.

(c) The resolvlent $R_\alpha=(\mathscr{L}+\alpha \text{i}d)^{-1}$ is a compact operator in $L^2(M)$, for some $\alpha>0$
\end{theorem}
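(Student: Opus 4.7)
\medskip
\noindent\textbf{Proof plan.} The plan is to realise $\mathscr{L}$ as the Friedrichs extension of the Dirichlet energy form $\mathcal{E}(u,v)=\int_M\langle\nabla u,\nabla v\rangle\,d\mu$ on $L^2(M,\mu)$, whose form domain is $W^1_0(M)$, so that $\mathscr{L}$ is a non-negative self-adjoint operator and $R_\alpha$ is a bounded self-adjoint operator on $L^2(M)$ for every $\alpha>0$. I would then prove the three-way equivalence by establishing (a)$\Leftrightarrow$(c) via abstract spectral theory and (b)$\Leftrightarrow$(c) via the mapping properties of $R_\alpha$.

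For (a)$\Leftrightarrow$(c), I would invoke the spectral theorem to write $\mathscr{L}=\int_0^\infty\lambda\,dE_\lambda$ for a projection-valued spectral measure $E$. Discreteness of the spectrum is equivalent to the statement that every spectral projection $E([0,N])$ has finite rank and the eigenvalues accumulate only at $+\infty$. Since $R_\alpha=\int_0^\infty(\lambda+\alpha)^{-1}\,dE_\lambda$, the spectrum of $R_\alpha$ equals $\{(\lambda_k+\alpha)^{-1}\}$, and compactness of the self-adjoint $R_\alpha$ is equivalent to this sequence accumulating only at $0$, i.e.\ to $\lambda_k\to+\infty$ with finite-dimensional eigenspaces. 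This gives the equivalence of (a) and (c).

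For (c)$\Rightarrow$(b), the key point is the energy estimate: for $f\in L^2(M)$, setting $u=R_\alpha f\in D(\mathscr{L})\subset W^1_0$, the identity $\mathcal{E}(u,u)+\alpha\|u\|_{L^2}^2=(f,u)_{L^2}$ together with Cauchy--Schwarz yields $\|u\|_{W^1_0}\leq C_\alpha\|f\|_{L^2}$, so that $R_\alpha$ factors as $L^2\xrightarrow{R_\alpha}W^1_0\xrightarrow{\iota}L^2$ with the first arrow bounded and the second compact by hypothesis, hence the composition is compact. For (b)$\Rightarrow$(c), I would use the positive square root $T=R_\alpha^{1/2}$ produced by the functional calculus; $T$ maps $L^2$ isomorphically onto $W^1_0$ endowed with the equivalent inner product $\mathcal{E}+\alpha(\cdot,\cdot)_{L^2}$, and writing $R_\alpha=\iota\circ T$ as a map $L^2\to L^2$ shows that compactness of $\iota$ transfers to compactness of $R_\alpha$.

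The main obstacle I foresee is keeping the Gelfand-triple bookkeeping clean in the weighted setting: one must verify that $\mathcal{E}$ is closed on $W^1_0(M,\mu)$ so that the Friedrichs extension exists with form domain exactly $W^1_0$, and that the square root $R_\alpha^{1/2}$ genuinely realises the isomorphism between $L^2$ and the form domain. Both are standard in the complete Riemannian case, but here we work on a general weighted manifold, so one should quote the analogue of Friedrichs' construction in that setting, which is developed in the earlier chapters of \cite{6} and underlies the whole equivalence.
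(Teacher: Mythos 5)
The paper offers no proof of this statement at all: it is quoted verbatim from Grigoryan \cite{6} (Theorem 10.20 there) and used as a black box to deduce Theorem 1.2 from Theorem 1.1. So there is no internal argument to compare against; your plan reproduces the standard textbook proof, and its two main components --- realising $\mathscr{L}$ as the Friedrichs/Dirichlet extension of the energy form with form domain $W^1_0(M)$, and the spectral-calculus equivalence (a)$\Leftrightarrow$(c) via $\sigma(R_\alpha)=\{(\lambda+\alpha)^{-1}\}$ --- are correct and are exactly the ingredients used in \cite{6}. The energy estimate $\mathcal{E}(u,u)+\alpha\|u\|^2=(f,u)$ for $u=R_\alpha f$ does give $\|R_\alpha f\|_{W^1_0}\le C_\alpha\|f\|_{L^2}$, so the factorisation $L^2\xrightarrow{R_\alpha}W^1_0\xrightarrow{\iota}L^2$ is legitimate.

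There is, however, one genuine defect in the logical bookkeeping: both of your factorisation arguments prove the \emph{same} implication, namely (b)$\Rightarrow$(c). The paragraph you label ``(c)$\Rightarrow$(b)'' invokes ``the second [arrow] compact by hypothesis'' --- but compactness of $\iota:W^1_0\hookrightarrow L^2$ \emph{is} statement (b), not (c), so that paragraph assumes (b) and concludes (c). Your second paragraph, labelled ``(b)$\Rightarrow$(c)'', again concludes that ``compactness of $\iota$ transfers to compactness of $R_\alpha$.'' As written, the implication (c)$\Rightarrow$(b) is never established, so you have only (a)$\Leftrightarrow$(c) and (b)$\Rightarrow$(c), not the full three-way equivalence. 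The repair is cheap and uses precisely the square-root isomorphism you already set up: if $R_\alpha$ is compact then so is $T=R_\alpha^{1/2}$ (for a non-negative self-adjoint operator, $T^2$ compact implies $T$ compact by the spectral theorem); since $T$ is an isomorphism of $L^2$ onto $W^1_0$ equipped with the norm $\bigl(\mathcal{E}(u,u)+\alpha\|u\|^2\bigr)^{1/2}$, the embedding factors as $\iota=T\circ T^{-1}$ with $T^{-1}:W^1_0\to L^2$ bounded and $T:L^2\to L^2$ compact, whence $\iota$ is compact. Note that for the use made of this theorem in the paper only the chain (b)$\Rightarrow$(c)$\Rightarrow$(a) is needed, and that chain your proposal does cover.
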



\section{The discontinunity of eigenfunction at infinity}
Assume that $u$ is non-constant eigenfunction of $\Delta_g$ corresponding to an eigenvalue $\lambda,$ i.e.,
\begin{equation}
\Delta_g u= -\lambda u.
\end{equation}
We rewrite it in the following form
\begin{equation}
\Delta u-( \nabla h,\nabla u)= -\lambda u e^{-\frac{|x|^2}{2(m-2)}}.
\end{equation}
We know that the Euclidean metric of $\mathbb{R}^m$ can be written in spherical coordinates $(r,\theta)$ as 
\[ds^2=dr^2+r^2d\theta^2,\]
where $d\theta^2$ is the standard metric on $S^{m-1}$.
Then
\begin{align*}
\Delta &= \frac{1}{\sqrt{detg}} \frac{\partial}{\partial r} \big(\sqrt{detg}\frac{\partial}{\partial r }\big)+\frac{1}{\sqrt{detg}} \frac{\partial}{\partial \theta_i} \big(\sqrt{detg}\ r^{-2}g_\theta^{ii} \frac{\partial}{\partial \theta_i}\big) \\
&= \frac{1}{r^{m-1}}\frac{\partial}{\partial r}(r^{m-1}\frac{\partial}{\partial r})+\frac{1}{r^2}\frac{1}{\sqrt{g_\theta}}\sum \frac{\partial}{\partial\theta_i}(g^{ii}_\theta\sqrt{g_\theta}\frac{\partial}{\partial{\theta_i}})\\
&= \frac{\partial^2}{\partial r^2}+\frac{m-1}{r}\frac{\partial}{\partial r}+\frac{1}{r^2} \Delta_\theta\\
&= \Delta_r+\frac{1}{r^2}\Delta_\theta,
\end{align*}
where $\Delta_\theta$ is the Laplacian on the standard $S^{m-1}.$
It is clear that  
\[\nabla h\cdot \nabla=\frac{r}{2}\frac{\partial}{\partial r}.\]
It follows from (3.2) that 

\[u_{rr}+\frac{m-1}{r}u_r+\frac{1}{r^2}\Delta_\theta u-\frac{r}{2}\frac{\partial u}{\partial r}=-\lambda e^{-\frac{r^2}{2(m-2)}}u.\]

Let $\varphi_k$ be the orthonormal basis on $L^2(S^{m-1})$ corresponding to the eigenvalues \\
\[0=\lambda_0<\lambda_1\le \lambda_2\le \cdots \le \lambda_k\rightarrow \infty \]
and satisfying 
\[\Delta_\theta \varphi_k=\lambda_k \varphi_k.\]
Also let $\langle \cdot,\cdot \rangle$ denote $L^2$ inner product of $L^2(S^{m-1}).$ Then we have 

\begin{align*}
\langle \Delta_r u,\varphi_k \rangle&=\Delta_r \langle u, \varphi_k \rangle,\\
\langle \Delta_\theta u,\varphi_k \rangle&= \langle u, \Delta_\theta \varphi_k \rangle =-\lambda_k\langle u,\varphi_k\rangle,\\
\langle \frac{\partial u}{\partial r},\varphi_k\rangle&=\frac{\partial \langle u,\varphi_k\rangle}{\partial r}.
\end{align*}
Let $f_k(r)=\langle u(r,\cdot),\varphi_k\rangle$ for $k\ge1$ and we denote $f=f_k.$
Then  we can reduce the PDE $ \Delta_g u= -\lambda u$ to an ODE 
\begin{equation}
f_{rr}+(\frac{m-1}{r}-\frac{r}{2})f_r=(-\lambda e^{-\frac{r^2}{2(m-2)}}+r^{-2}\lambda_k)f.
\end{equation}

In the following theorem (Theorem 3.1), under the assumption that $u$ is continuous at infinity, we will have $u=f_0(r)$, which means that $u$ is radial symmetry. Then we use maximum principle to get a contradiction, so that we prove Theorem 1.3.
\begin{theorem}
Let $u$ be an eigenfunction of $\Delta_g$ corresponding to an eigenvalue $\lambda$. If $u$ is continuous at infinity, then $f_k=\langle u,\varphi_k\rangle\equiv 0$ for $k\ge1.$ In other word, $u$ is radial symmetry.
\end{theorem}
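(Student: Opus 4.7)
The strategy is to reduce the theorem to an ODE statement via the spherical harmonic decomposition and then to analyze the solution space of the resulting radial ODE at the irregular singular point $r=\infty$. First I would translate the hypothesis ``$u$ continuous at $\infty$'' into pointwise decay of the Fourier modes: the hypothesis means $u(x)\to c$ uniformly for some constant $c$ as $|x|\to\infty$, and since $\int_{S^{m-1}}\varphi_k=0$ for $k\ge 1$,
\[
f_k(r)=\int_{S^{m-1}} u(r\theta)\varphi_k(\theta)\,d\theta\;\longrightarrow\; c\int_{S^{m-1}}\varphi_k=0,\qquad r\to\infty.
\]
Thus each $f_k$ with $k\ge 1$ is a smooth solution of (3.3), regular at the origin (by smoothness of $u$ on $\mathbb{R}^m$), and tending to zero at infinity; the task is to show that any such solution must vanish identically.

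Next, I would perform an asymptotic analysis of (3.3) at $r=\infty$. The dominant balance for large $r$ is the model equation $(r^{m-1} e^{-r^2/4} f')'=0$, whose independent solutions are the constant $1$ and $\int_1^r s^{1-m} e^{s^2/4}\,ds\sim \frac{2}{r^m}e^{r^2/4}$. The two remaining terms $\lambda_k/r^2$ and $\lambda e^{-r^2/(2(m-2))}$ are subdominant at infinity (the first polynomially, the second exponentially), so a standard Liouville--Green / contraction argument on the integral form of the residual over $[R_0,\infty)$ produces a basis $\{f_\infty^{(1)},f_\infty^{(2)}\}$ of the full solution space with
\[
f_\infty^{(1)}(r)=1+O(r^{-2}),\qquad f_\infty^{(2)}(r)\sim C\,r^{-m} e^{r^2/4}\quad\text{as }r\to\infty.
\]

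With this basis in hand the conclusion is immediate. Write $f_k=A f_\infty^{(1)}+B f_\infty^{(2)}$ uniquely. Boundedness of $f_k$ near infinity, which follows from $f_k(r)\to 0$, forces $B=0$ since $f_\infty^{(2)}$ blows up exponentially; then $f_k\equiv A f_\infty^{(1)}$ and
\[
A=\lim_{r\to\infty} \frac{f_k(r)}{f_\infty^{(1)}(r)}=\frac{0}{1}=0,
\]
so $f_k\equiv 0$ on $[R_0,\infty)$ and hence everywhere on $(0,\infty)$ by ODE uniqueness. The main obstacle I anticipate is making the asymptotic construction in the second step rigorous, i.e.\ producing honest solutions realizing the stated leading behavior rather than merely formal series; the key point that must be verified is that the bounded mode tends to a \emph{nonzero} constant, since it is this fact that prevents a nontrivial decaying solution. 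Once the asymptotic basis is established, everything else is linear algebra and uniqueness of Cauchy problems for the ODE.
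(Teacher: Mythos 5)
Your reduction to the ODE and the translation of continuity at infinity into $f_k(r)\to 0$ coincide with the paper, but from that point on you take a genuinely different route. The paper stays entirely at the level of the divergence form $(r^{m-1}e^{-r^2/4}f')'=r^{m-1}e^{-r^2/4}q(r)f$ with $q(r)=-\lambda e^{-r^2/(2(m-2))}+\lambda_k r^{-2}>0$ for large $r$: the sign of $q$ plus the maximum principle forces a nontrivial decaying $f$ to be monotone, say $f\ge 0$, $f'\le 0$, and a single integration from $r$ to $\infty$ gives $-f'/f\le 2\lambda_k r^{-3}(1+o(1))$, whence $f$ is bounded below by a positive constant --- contradicting $f(\infty)=0$. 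You instead construct the full asymptotic solution basis at the irregular singular point, $f_\infty^{(1)}=1+O(r^{-2})$ and $f_\infty^{(2)}\sim Cr^{-m}e^{r^2/4}$, and read off that no nonzero solution decays. Both arguments ultimately rest on the same fact, which you correctly single out as the crux: the bounded solution tends to a \emph{nonzero} constant. Your contraction step does close, since the relevant iterated kernel satisfies
\[
\int_r^\infty s^{1-m}e^{s^2/4}\Bigl(\int_s^\infty t^{m-1}e^{-t^2/4}|q(t)|\,dt\Bigr)ds=O\Bigl(\lambda_k\int_r^\infty s^{-3}\,ds\Bigr)=O(r^{-2}),
\]
so the Volterra iteration converges on $[R_0,\infty)$ for $R_0$ large. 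What your approach buys is more information (complete asymptotics of every solution, and no need for a sign or monotonicity discussion, hence robustness under perturbations of the potential); what the paper's approach buys is elementarity --- it never needs the existence of an asymptotic basis, only a one-line integration of the divergence identity. Do make sure, when you write this up, to carry out the asymptotic construction honestly (e.g.\ via the integral equation $f=1-\int_r^\infty s^{1-m}e^{s^2/4}\int_s^\infty t^{m-1}e^{-t^2/4}q(t)f(t)\,dt\,ds$) rather than citing a formal series, and to note that linear independence of the two constructed solutions is immediate from their disparate growth.
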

\begin{proof}
We rewrite (3.3) in divergence form
\begin{equation}
(f^{\prime} r^{m-1}e^{-\frac{r^2}{4}})^\prime=r^{m-1}e^{-\frac{r^2}{4}}(-\lambda e^{-\frac{r^2}{2(m-2)}}+r^{-2}\lambda_k)f.\end{equation}
Since $u$ is continuous at infinity,  $\lim\limits_{r\to \infty}{ u(r,\cdot)=A}$, we have
\begin{center}
$f_k(+\infty)=\lim\limits_{r\to +\infty}{\langle u(r,\cdot),\varphi_k\rangle}=\langle \lim\limits_{r\to +\infty} {u(r,\cdot),\varphi_k\rangle}=\langle A,\varphi_k\rangle=0,$ \quad for $(k\ge 1).$
\end{center}
It is clear that 
\begin{center}
$-\lambda e^{-\frac{r^2}{2(m-2)}}+r^{-2}\lambda_k=e^{-\frac{r^2}{2(m-2)}}(\frac{e^{\frac{r^2}{2(m-2)}}}{r^2}\lambda_k-\lambda)>0$  as $r>>1$.
\end{center}
Then by maximum principle, we can see from (3.4) that $f$ cannot have positive maximum or negative minimum unless $f\equiv 0$. If $f$ is not identically zero, then we can show that
$f$ must be monotonic for $r>>1$. Without loss of generality, we assume that $f^{\prime}\le 0$ (for $r>>1)$, hence $f(r)\ge 0.$ 

Integrating from $r$ to $+\infty,$ we  will have 
\begin{align*}
0-f^{\prime} r^{m-1}e^{-\frac{r^2}{4}}&=\int^{+\infty}_{r} r^{m-1}e^{-\frac{r^2}{4}}(-\lambda e^{-\frac{r^2}{2(m-2)}}+r^{-2}\lambda_k)f dr\\
&\le \int^{+\infty}_{r} r^{m-3}e^{-\frac{r^2}{4}}\lambda_kf dr \\
&\le \int^{+\infty}_{r} r^{m-3}e^{-\frac{r^2}{4}}\lambda_kdr f.\\
\end{align*}
Then
\begin{align*}
-\frac{f^{\prime}}{f}&\le  r^{1-m}e^{\frac{r^2}{4}}\int^{+\infty}_{r}r^{m-3}e^{-\frac{r^2}{4}}\lambda_kdr \\
&=\lambda_k\frac{\int^{+\infty}_{r}r^{m-3}e^{-\frac{r^2}{4}}}{r^{m-1}e^{-\frac{r^2}{4}}}\\
&=\lambda_k\frac{1}{r^3}(2+o(1)).
\end{align*}
It is equivalent to
\[(fe^{-\frac{\lambda_k}{r^{2}}(1+o(1))})^\prime\ge 0.\]
Then we have 
\begin{center}
$f\ge c_0e^{\frac{\lambda_k}{r^2}(1+o(1))}\rightarrow c_0$ as $r\rightarrow +\infty,$
\end{center}
which contradicts the fact that $f_k(+\infty)=0.$ Hence we must have $f=f_k\equiv 0$ for $k\ge 1$.
It follows that 
\[u=\sum_k\langle u,\varphi_k\rangle\varphi_k=\sum_kf_k\varphi_k=f_0\varphi_0=\int_{S^{n-1}}ud\theta=f_0(r).\]
So $u$ is radial symmetry.
\end{proof}

\begin{proof}[Proof of Theorem \ref{thm:1.3}]
If the conclusion is not true, we assume that $\lim\limits_{r\to \infty}{ u(r,\cdot)=A}.$
From the result of Theorem 3.1, we know that $u$ is radial symmetry and $u=f_0(r)$.
So it satisfies following equation
\[u^{\prime\prime}+(\frac{m-1}{r}-\frac{r}{2})u^{\prime}=-\lambda e^{-\frac{r^2}{2(m-2)}}u.\]
Let $L=\frac{d^2}{dr^2}+(\frac{m-1}{r}-\frac{r}{2})\frac{d}{dr}$ and we choose $y=e^{2r^2},$ then we have

\[L(y-u)=e^{2r^2}(14r^2+4m)+\lambda e^{-\frac{r^2}{2(m-2)}}u\]
Since 
\begin{center}
$e^{-\frac{r^2}{2(m-2)}}u\rightarrow 0 $ as  $r\rightarrow+\infty$,
\end{center}
 we obtain
\begin{center}
$L(y-u)=e^{2r^2}(14r^2+4m)+\lambda e^{-\frac{r^2}{2(m-2)}}u\rightarrow +\infty$ as $r\rightarrow+\infty$\end{center}
There exists $ R_0$ such that
\begin{center}
$ L(y-u)\ge 0$ \quad in  \quad $B_{R_0}^C.$
\end{center}
For fixed $R_0$, we can choose a constant $C_{R_0}$ such that 
\begin{center}
$y-C_{R_0}-u\le 0$ \quad on  \quad $\partial B_{R_0}.$
\end{center}
However,
\begin{center}
$ L((y-C_{R_0})-u)\ge 0$ \quad in  \quad $B_{R_0}^C.$
\end{center}
By the maximum principle, we obtain that
\begin{center}
$y-C_{R_0}-u\le 0$ \quad in \quad $B_{R_0}^C.$
\end{center}
So we obtain
\begin{center}
$u\ge y-C_{R_0} \to +\infty$ \quad as  \quad $r\to +\infty,$ 
\end{center}
which contradicts the assumption that $\lim\limits_{r\to +\infty}{ u(r,\cdot)=A}.$ Hence we have $u$ must be discontinuous at $\infty.$
\end{proof}
We can use the same method to prove the discontinuity of eigenfunctions of the drifted Laplace $\Delta_h=\Delta_{g_0}-\nabla_{g_0} h\cdot\nabla_{g_0} $.

\begin{proof}[Proof of Theorem \ref{thm:1.4}]
we know $u$ satisfies the following equation
 \begin{equation}
\Delta _h u= -\lambda u.
\end{equation}
We rewrite it in the following form
\begin{equation}
\Delta_{g_0} u-( \nabla_{g_0} h,\nabla_{g_0} u)= -\lambda u .
\end{equation}
We can do the similar computation as in the beginning of Section 3 
 \begin{align*}
\Delta_h&= \Delta_{g_0}-\nabla_{g_0} h\cdot\nabla_{g_0}\\
&=\frac{\partial^2}{\partial r^2}+ \frac{m-1}{r}\frac{\partial}{\partial r}+\frac{1}{r^2} \Delta_\theta-\frac{r}{2}\frac{\partial}{\partial r}.\\
\end{align*}
where $\Delta_\theta$ is the Laplacian on the standard $S^{m-1}.$
Then (3.5) can be written as 
\begin{equation}
u_{rr}+\frac{m-1}{r}u_r+\frac{1}{r^2}\Delta_\theta u-\frac{r}{2}\frac{\partial u}{\partial r}=-\lambda u.
\end{equation}
Let $\varphi_k$ be the orthonormal basis on $L^2(S^{m-1})$ corresponding to the eigenvalues \\
\[0=\lambda_0<\lambda_1\le \lambda_2\le \cdots \le \lambda_k\rightarrow \infty \]
and satisfying 
\[\Delta_\theta \varphi_k=\lambda_k \varphi_k.\]
Let $f_k(r)=\langle u(r,\cdot),\varphi_k\rangle$ for $k\ge1$ and we denote $f=f_k.$
Then from (3.5) we see that $f=f_k$ satisfies 
\begin{equation}
f_{rr}+(\frac{m-1}{r}-\frac{r}{2})f_r=(-\lambda +r^{-2}\lambda_k)f.
\end{equation}
We prove it by contradiction. Assume $u$ is continuous at $\infty, \lim\limits_{r\to +\infty} {u(r,\cdot)=A}$, then
\begin{center}
$f_0(+\infty)=\lim\limits_{r\to +\infty}{\langle u(r,\cdot),\varphi_0\rangle}=\langle \lim\limits_{r\to +\infty} {u(r,\cdot),\varphi_0\rangle}=\langle A,\varphi_0\rangle=c_0.$
\end{center}

On the other hand, $f_0$ satisfies following equation
\[f_{rr}+(\frac{m-1}{r}-\frac{r}{2})f_r=-\lambda f.\]
Let $L=\frac{d^2}{dr^2}+(\frac{m-1}{r}-\frac{r}{2})\frac{d}{dr}$ and we choose $y=e^{2r^2},$ then we have
\begin{center}
$L(y-f_0)=e^{2r^2}(14r^2+4m)+\lambda f _0\rightarrow +\infty$ as $r\rightarrow +\infty.$
\end{center}
 
There exists $\widetilde {R_0}$ such that
\begin{center}
$ L(y-f_0)\ge 0$ \quad in  \quad $B_{\widetilde{R_0}}^C.$
\end{center}
For fixed $R_0$, we can choose a constant $C_{\widetilde{R_0}}$ such that 
\begin{center}
$y-C_{\widetilde{R_0}}-f_0\le 0$ \quad on  \quad $\partial B_{\widetilde{R_0}}.$
\end{center}
However,
\begin{center}
$ L((y-C_{\widetilde{R_0}})-f_0)\ge 0$ \quad in  \quad $B_{\widetilde{R_0}}^C.$
\end{center}
By the maximum principle, we obtain that
\begin{center}
$y-C_{\widetilde{R_0}}-f_0\le 0$ \quad in \quad $B_{\widetilde{R_0}}^C.$
\end{center}
So we obtain
\begin{center}
$f_0 \ge y-C_{\widetilde{R_0}}\to +\infty$ \quad as  \quad $r\to +\infty,$ 
\end{center}
which contradicts the assumption that $\lim\limits_{r\to +\infty}{ u(r,\cdot)=A}.$ Hence we have $u$ must be discontinuous at $\infty.$
\end{proof}
\section{ existence of  solution of the equation $-\Delta_gu=f$ }
\begin{theorem}
Assume $f\in L^2(\bar M)$, then the following equation\\
\begin{equation}
 \left  \{
\begin{aligned}
&-\Delta_g u =f;\\
&u\in H^1_0(\bar{M})
\end{aligned}
\right.
\end{equation}
has a unique solution  $u\in H^1_0(\bar{M}).$
\end{theorem}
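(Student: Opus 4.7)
The plan is to recast the problem in weak form and apply the Lax--Milgram theorem on the Hilbert space $H^1_0(\bar M)$. Define the symmetric bilinear form
\[
a(u,v)=\int_{\bar M}\langle\nabla_g u,\nabla_g v\rangle_g\,dV_g
\]
and the linear functional $F(v)=\int_{\bar M}fv\,dV_g$. A function $u\in H^1_0(\bar M)$ is a weak solution of $-\Delta_g u=f$ precisely when $a(u,v)=F(v)$ for every $v\in H^1_0(\bar M)$, so if $a$ is continuous and coercive and $F$ is continuous, Lax--Milgram will produce a unique $u$ and yield existence and uniqueness in one stroke.

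Continuity of $a$ on $H^1_0(\bar M)\times H^1_0(\bar M)$ is immediate from Cauchy--Schwarz in $L^2(\bar M)$, and continuity of $F$ follows from $|F(v)|\le\|f\|_{L^2(\bar M)}\|v\|_{L^2(\bar M)}\le\|f\|_{L^2(\bar M)}\|v\|_{H^1_0(\bar M)}$. The real work is coercivity, which I would obtain from a Poincar\'e inequality
\[
\|v\|_{L^2(\bar M)}^2\le C\,\|\nabla_g v\|_{L^2(\bar M)}^2
\qquad\text{for all }v\in H^1_0(\bar M).
\]
To establish this, I would run the Bakry--\'Emery framework from Section~2 on the smooth metric measure space $(\mathbb{R}^m,g_0,d\mu)$ with $f=\frac{m}{4(m-2)}|x|^2$: then $\mathrm{Ric}_f=\frac{m}{2(m-2)}g_0>0$ and $\int e^{-f}dV_{g_0}<\infty$, so the logarithmic Sobolev inequality (2.2) holds, and by Proposition~2.1 of~\cite{8} this upgrades to a Poincar\'e inequality on $H^1_0(\mathbb{R}^m,g_0,d\mu)$. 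Since $m>2$ one has the pointwise weight comparison $e^{-m|x|^2/(4(m-2))}\le e^{-|x|^2/4}$, which allows one to dominate the $d\mu$-Dirichlet integral by $\|\nabla_g v\|_{L^2(\bar M)}^2$ and thereby transfer the Poincar\'e inequality to $H^1_0(\bar M)$.

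With the Poincar\'e inequality in hand, $a(u,u)=\|\nabla_g u\|_{L^2(\bar M)}^2\ge\frac{1}{1+C}\|u\|_{H^1_0(\bar M)}^2$, coercivity is secured, and Lax--Milgram produces the unique $u\in H^1_0(\bar M)$ satisfying $a(u,v)=F(v)$ for every test $v$. The main obstacle is genuinely the Poincar\'e inequality, since $\bar M$ is non-complete with a highly singular metric at infinity and the measure $dV_g$ and the $g$-norm on gradients are weighted by two different exponentials; the strategy above reduces both difficulties to the already-cited Bakry--\'Emery input and the weight comparison, after which Lax--Milgram is routine.
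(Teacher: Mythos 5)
Your proposal is correct and follows essentially the same route as the paper: both reduce the problem to the weak formulation, derive the Poincar\'e inequality from the Bakry--\'Emery logarithmic Sobolev inequality via Proposition~2.1 of Ledoux together with the pointwise weight comparison, and then invoke an abstract Hilbert-space existence theorem (the paper uses the Riesz representation theorem for the Dirichlet inner product $[u,\varphi]=(\nabla_g u,\nabla_g\varphi)_g$, which for your symmetric coercive form is the same argument as Lax--Milgram).
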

\begin{proof}
We rewrite the equation of (4.1)  in the distributional sense
 \begin{center}
$-(\Delta_g u,\varphi)_g=(f,\varphi)_g \quad$ for all $ \varphi \in H^1_0(\bar{M}).$
\end{center}
It follows that 
\begin{equation}
(\nabla_g u,\nabla_g\varphi)_g=(f,\varphi)_g.
\end{equation}
We set 
\begin{equation}
[u,\varphi]=(\nabla_g u,\nabla_g\varphi)_g( \forall u,\varphi \in H^1_0(\bar{M}) ).
\end{equation}
A logarithmic  Sobolev inequality (1.1) implies a Poincar$\acute{e}$ inequality (cf \cite{8} Prop 2.1)
\begin{align*}
\int_{\mathbb{R}^m}u^2dV_g&=\int_{\mathbb{R}^m}u^2e^{-\frac{m|x|^2}{m-2}}dV_{g_0}\\
&\le \frac{2(m-2)}{m}\int_{\mathbb{R}^m} |\nabla u|_{g_0}^2e^{-\frac{m|x|^2}{m-2}}dV_{g_0}\\
&\le \frac{2(m-2)}{m}\int_{\mathbb{R}^m} |\nabla u|_{g_0}^2e^{\frac{|x|^2}{2(m-2)}}e^{-\frac{m|x|^2}{m-2}}dV_{g_0}\\
&=\frac{2(m-2)}{m}\int_{\mathbb{R}^m} |\nabla u|_g^2dV_g.\\
\end{align*}
Therefore, the space $H^1_0(\bar M)$ with the inner product $[\cdot,\cdot]$ is complete. The equation $(4.1)$ can be written as  
\[[u,\varphi]_\alpha=(f,\varphi)_g.\]
On the other hand,
\[|(f,\varphi)_g|\le||f||_{L^2(\bar{M})}||\varphi||_{L^2(\bar{M})}\le ||f||_{L^2(\bar{M})}[\varphi,\varphi]^{\frac{1}{2}},\]
we know that $\varphi \mapsto \int_{\mathbb{R}^m}f \cdot \varphi dV_g \quad (\forall  \varphi \in H^1_0(\bar M))$ is a bounded linear function.\\
By the Riesz representation theorem, we have a unique solution $u\in H^1_0(\bar M)$. 
\end{proof}
~\\
\renewcommand{\abstractname}{Acknowledgements}
\begin{abstract}
I would like to show my deepest gratitude to my
supervisor, Prof. Li, Jiayu, who has provided me with valuable guidance in every stage of the writing of this
thesis.
\end{abstract}
 ~\\


\begin{thebibliography}{99}
\bibitem{1}Lin Fanghua, Wang Changyou, {\em Harmonic and quasi-harmonic spheres}, Comm. Anal. Geom.  {\bf 7}(1999), 397-429.
\bibitem{2}Ding, Weiyue, Zhao Yongqiang, {\em Elliptic equations strongly degenerate at a point}, Nonlinear Anal. {\bf 65}(2006), 1624-1632.
\bibitem{3}D Bakery and M $\acute{E}$mery, {\em Diffusions hypercontractives, Seminaire de probabilities,XIX,1983/84}, Lecture Notes in Math. {\bf 1123}(1985), 177-206.
\bibitem{4}
Xu Cheng and Detang Zhou, {\em Eigenvalues of the drifted Laplace on complete metric measure space}, arXiv: 1305.4116v2[math DG] 16 Oct 2013.
\bibitem{5}
Hans-Joachim Hein and Aaron Naber, {\em New logarithmic Soblev inequalities and a $\epsilon$-regularity theorem for the Ricci flow},  arXiv:1205.0380v1 [math DG] 2 May 2012.
\bibitem{6}
Alexander Grigoryan, Heat Kernel and Analysis on Manifolds, American Mathematical Soc., 2009(English)
\bibitem{7}
Jos$\acute{e}$F. Escobar, {\em On the spectrum of the Laplacian on complete riemannian manifolds}, Communications in Partial Differential Equations, {\bf 11:1}(1986), 63-85. 
\bibitem{8}
Michel Ledoux, {\em Concentration of measure and logarithmic Sobolev inequalities, S$\acute{e}$minaire de probabiliti$\acute{e}$s de Strasbourg} {\bf 33} (1999), 120-216.
\bibitem{9}
P.Mayer, {\em Probability and Potentials}, Blaisdell Publishing Co.,  (1991).
\bibitem{10}
Michael Struwe, {\em On the evolution of harmonic maps in higher dimensions},J.Differential Geometry {\bf 28} (1988), 485-502
\end{thebibliography}
\end{document}